\newtheorem{theorem}{\sc Theorem}[section]
\newtheorem{proposition}[theorem]{\sc Proposition}
\newtheorem{lemma}[theorem]{\sc Lemma}
\theoremstyle{definition}
\newtheorem{example}[theorem]{\sc Example}
\theoremstyle{remark}
\newtheorem{remark}[theorem]{\sc Remark}
\newcommand{\tensor}[1]{\otimes_{\scriptscriptstyle{#1}}}
\newcommand{\fk}[1]{\mathfrak{#1}}
\renewcommand{\hom}[3]{\mathrm{Hom}_{\Sscript{#1}}\left(#2,\,#3\right)}
\newcommand{\bcirc}{{\boldsymbol{\circ}}}
\newcommand{\prlimit}[2]{\varprojlim_{#1}\left({#2}\right)}
\newcommand{\m}{\mathfrak{m}}
\renewcommand{\ker}[1]{\mathrm{Ker}\left({#1}\right)}
\newcommand{\what}[1]{\widehat{#1}}
\definecolor{bostonuniversityred}{rgb}{0.8, 0.0, 0.0}
\newcommand{\N}{\mathbb{N}}
\newcommand{\K}{\Bbbk}
\newcommand{\C}{\mathbb{C}}
\newcommand{\hH}{\mathscr{H}}
\newcommand{\tT}{\mathscr{T}}
\newcommand{\cI}{{\mathcal I}}
\newcommand{\cJ}{{\mathcal J}}
\newcommand{\cL}{{\mathcal L}}
\newcommand{\Sscript}[1]{\scriptscriptstyle{#1}}
\newcommand{\limn}{\underset{n\to\infty}{\lim}}
\newcommand{\Sqx}{(x_{n})_{\Sscript{n\, \geq \, 0}}}
\newcommand{\Sqy}{(y_{n})_{\Sscript{n\, \geq \, 0}}}
\newcommand{\Sqz}{(z_{n})_{\Sscript{n\, \geq \, 0}}}
\newcommand{\Squ}{(u_{n})_{\Sscript{n\, \geq \, 0}}}
\newcommand{\Sqa}{(a_{n})_{\Sscript{n\, \geq \, 0}}}
\begin{document}
\allowdisplaybreaks

\title[Comparing topologies on linearly recursive sequences]{Comparing topologies on linearly recursive sequences.}

\author{Laiachi El Kaoutit}
\address{Universidad de Granada, Departamento de \'{A}lgebra and IEMath-Granada. Facultad de Educaci\'{o}n, Econon\'ia y Tecnolog\'ia de Ceuta. Cortadura del Valle, s/n. E-51001 Ceuta, Spain}
\email{kaoutit@ugr.es}
\urladdr{http://www.ugr.es/~kaoutit/}

\author{Paolo Saracco}
\address{University of Turin, Department of Mathematics ``Giuseppe Peano'', via Carlo Alberto 10, I-10123 Torino, Italy}
\email{p.saracco@unito.it}
\urladdr{sites.google.com/site/paolosaracco}

\date{\today}
\subjclass[2010]{Primary  13J05, 40A05, 16W70; Secondary 13J10, 54A10, 16W80}
\keywords{Linearly recursive sequences; Adic topologies; Power series; Hopf algebras. }
\thanks{This paper was written while P. Saracco was member of the ``National Group for Algebraic and Geometric Structures, and their Applications'' (GNSAGA - INdAM). His stay as visiting researcher at the campus of Ceuta of the University of Granada was financially supported by IEMath-GR. Research supported by grant PRX16/00108, and Spanish Ministerio de Econom\'{\i}a y Competitividad  and the European Union FEDER, grant MTM2016-77033-P}

\begin{abstract}
The space of linearly recursive sequences of complex numbers admits two distinguished topologies. Namely, the adic topology induced by the ideal of those sequences whose first term is $0$ and the topology induced from the Krull topology on the space of complex power series via a suitable embedding. We show that these topologies are not equivalent.
\end{abstract}

\maketitle

%\vspace{-0.8cm}
%\begin{small}
%\tableofcontents
%\end{small}

\pagestyle{headings}

\section{Introduction}

A linearly recursive sequence of complex numbers is a sequence of elements of $\C$ which satisfies a recurrence relation with constant coefficients.  These sequences arise widely in mathematics and have been studied extensively and from different points of view, see \cite{poorten}  for a survey on the subject. 
Classically they are related with formal power series, in the sense that a sequence $(s_n)_{\Sscript{n \geq 0}}$ is linearly recursive if and only if its generating function $\sum_{\Sscript{n \geq 0}}s_nZ^n$ is a rational function $p(Z)/q(Z)$, where $p(Z),q(Z)\in\C[Z]$ and  $q(0)\neq 0$. Nevertheless, few topological properties seems to be known. For instance, it is known that the space $\cL in(\C)$ of all linearly recursive sequences of any order forms an augmented algebra under the Hurwitz product, with augmentation given by the projection on the $0$-th component. As such, it comes endowed with a natural topology, which is the adic topology induced by the kernel $J$ of this augmentation.  Besides, there is a monomorphism of algebras  which assigns every linearly recursive sequence $(s_n)_{\Sscript{n \geq 0}}$ to the power series $\sum_{\Sscript{n \geq 0}}\left({s_n}/{n!}\right)Z^n$. Through this monomorphism, the algebra of  linearly recursive sequences can be considered as a subalgebra of $\C[[Z]]$ and, as such, it inherits another natural topology, namely the one induced by the Krull topology on $\C[[Z]]$ (the adic topology induced by the unique maximal ideal $\m$ of $\C[[Z]]$, which is also the augmentation ideal induced by the evaluation at $0$).

These two topologies are very close. Namely, up to the embedding above, one can see that $J=\m\cap \cL in(\C)$, so that the adic topology is finer than the induced one. A natural question which arises is if these are equivalent or not.  Notice that, since the finiteness hypotheses are not fulfilled, Artin-Rees Lemma fails to be applied in this context. 
In fact, in this note we will compare these two topologies and we will give a negative answer to the previous question: the $J$-adic topology is strictly finer than the induced one. 

As a by-product, we will see that the adic completion of $\cL in(\C)$ is larger than its completion with respect to the induced topology, which in fact can be identified with $\C[[Z]]$ itself, in the sense that we will provide a split surjective morphism $\what{\cL in(\C)}\to \C[[Z]]$.

Our approach will take advantage of the fact that, from an algebraic point of view, linearly recursive sequences may be identified with the finite (or continuous) dual of the algebra   $\C[X]$ of polynomial functions of the additive affine algebraic $\C$-group, and that the formal power series can be considered as its full linear dual $\C[X]^*$.

The main motivation behind this note comes from studying the completion of the finite dual Hopf algebra of the universal enveloping algebra of a finite-dimensional complex Lie algebra.

\section{The space of linearly recursive sequences and Hurwitz's product}\label{sec:linrecursion}

We assume to work over the field $\C$ of complex numbers. However, it will be clear that this choice is not restrictive as the results will hold as well if we consider any algebraically closed field $\K$ of characteristic $0$ instead. An augmented complex algebra $A$, is an algebra endowed with a morphism of algebras $ A \to \C$, called the \emph{augmentation}. 

All vector spaces, algebras and coalgebras are assumed to be over $\C$. The unadorned tensor product $\otimes$ denotes the tensor product over $\C$. All maps are assumed to be at least $\C$-linear. For every vector space $V$, the $\C$-linear dual of $V$ is $V^*=\hom{\C}{V}{\C}$ (i.e., the vector space of all linear forms from $V$ to $\C$). Given a coalgebra $C$, for dealing with the comultiplication of an element $x\in C$ we will resort to the Sweedler's Sigma notation $\Delta(x)=\sum x_1\otimes x_2$.

Consider the vector space $\C^\N$ of all sequences $\Sqz$ of complex numbers. A sequence $\Sqz\in\C^\N$ is said to be \emph{linearly recursive} if there exists a family of constant coefficients $c_1,\ldots,c_r\in\C$, $r\geq 1$, such that
\begin{equation*}
z_n = c_1 z_{n-1} + c_2 z_{n-2} + \cdots + c_r z_{n-r} \qquad \text{for all } n\geq r.
\end{equation*}
Denote by $\cL in(\C)\subseteq \C^\N$ the vector subspace of all linearly recursive sequences.

Then the study of the algebraic and/or topological properties of the vector space $\cL in(\C)$ depends heavily on which product we are choosing on the vector space $\C^{\N}$, since the latter can be endowed with at least two algebra structures, as the subsequent Lemma \ref{lema:Fornaca} shows. 

%In the forthcoming two sections we will focus on one of these, which arises from the subsequent Lemma \ref{lema:Fornaca}. In the last section we will introduce the second one and we will analyse shortly some consequences of this different choice.

\begin{lemma}\label{lema:Fornaca}
The assignment $\Upphi:\C^\N\to \C[X]^*$ given by
\begin{equation*}
\big[\Upphi\big(\Sqz\big)\big](X^m)=z_m \qquad \text{for all }m\geq 0
\end{equation*}
is an isomorphism of vector spaces.
\end{lemma}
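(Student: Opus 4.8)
The plan is to construct an explicit two-sided inverse to $\Upphi$ and verify that both maps are $\C$-linear. Recall that an element of $\C[X]^*$ is a $\C$-linear form $f\colon\C[X]\to\C$, and since the monomials $\{X^m\}_{m\geq 0}$ form a $\C$-basis of $\C[X]$, such a form is completely and freely determined by the scalars $f(X^m)\in\C$. This is the structural fact that makes the whole statement essentially a bookkeeping identity, and it is the observation I would isolate first.

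First I would check that $\Upphi$ is well defined and linear. Given a sequence $\Sqz\in\C^\N$, the prescription $\big[\Upphi(\Sqz)\big](X^m)=z_m$ fixes the value of the form on each basis element, hence extends uniquely by linearity to a genuine element of $\C[X]^*$; thus $\Upphi$ lands where claimed. Linearity of $\Upphi$ in the sequence is immediate, since for $\Sqz,\Sqy\in\C^\N$ and $\lambda\in\C$ one has $\big[\Upphi(\Sqz+\lambda\Sqy)\big](X^m)=z_m+\lambda y_m=\big[\Upphi(\Sqz)+\lambda\,\Upphi(\Sqy)\big](X^m)$ on every monomial, and two linear forms agreeing on a basis coincide.

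Next I would exhibit the inverse. Define $\Psi\colon\C[X]^*\to\C^\N$ by $\Psi(f)=\big(f(X^n)\big)_{\Sscript{n\,\geq\,0}}$; this is plainly $\C$-linear. The composite $\Psi\circ\Upphi$ sends $\Sqz$ to the sequence whose $n$-th term is $\big[\Upphi(\Sqz)\big](X^n)=z_n$, so $\Psi\circ\Upphi=\id$. Conversely, $\Upphi\circ\Psi$ sends $f$ to the form whose value on $X^m$ is the $m$-th term of $\Psi(f)$, namely $f(X^m)$; since this agrees with $f$ on the basis $\{X^m\}$, we get $\Upphi\circ\Psi=\id$. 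Hence $\Upphi$ is a bijective $\C$-linear map, i.e.\ an isomorphism of vector spaces.

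There is no genuine obstacle here: the only point requiring care is the appeal to the universal property of the basis $\{X^m\}_{m\geq 0}$, which guarantees both that $\Upphi(\Sqz)$ extends to a well-defined linear form and that a form is determined by its values on monomials. Everything else is a direct unwinding of definitions. I would therefore present the argument compactly, emphasizing the bijection between sequences $\Sqz$ and families of scalars $\big(f(X^n)\big)_{n\geq 0}$, and leave the routine linearity verifications to the reader.
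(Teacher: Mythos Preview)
Your argument is correct and is exactly the natural one: use that $\{X^m\}_{m\geq 0}$ is a basis of $\C[X]$ to identify linear forms with their families of values on monomials, exhibit the obvious inverse $f\mapsto (f(X^n))_{n\geq 0}$, and check the two composites are identities. The paper in fact provides no proof for this lemma at all, treating the statement as self-evident; your write-up simply spells out what the authors leave implicit, so there is nothing to compare.
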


Next we recall how the vector space $\cL in(\C)$ can be endowed with a Hopf algebra structure, by using the Hurwitz's product. Recall first that  $\C[X]$ is in fact a Hopf algebra, as it can be identified with the algebra of polynomial functions on the affine complex line $\mathbb{A}_\C^1=\C$, viewed as an algebraic group with the sum. Comultiplication, counit and antipode are the algebra morphisms induced by the assignments
\begin{equation*}
\Delta(X)=X\otimes 1+1\otimes X, \qquad \varepsilon(X)=0,\qquad S(X)=-X.
\end{equation*}
From this it follows that $\C[X]^*$ is an augmented algebra under the \emph{convolution product}
\begin{equation}\label{Eq:convolution}
(f*g)\left(X^n\right)=\sum_{k=0}^n\binom{n}{k}f\left(X^k\right)g\left(X^{n-k}\right)\qquad \text{for all } n\geq 0.
\end{equation}
The unit of $\C[X]^*$ is the counit $\varepsilon$ of $\C[X]$. The augmentation $\varepsilon_*$ is given by evaluation at $1$.

As a consequence, the vector space $\C^\N$ turns out to be an augmented algebra as well in such a way that $\Upphi$ becomes an algebra isomorphism. The product of this algebra is the so-called \emph{Hurwitz's product}
\begin{equation}\label{Eq:Hproduct}
\Sqz \,*\,\Squ=\left(\sum_{k=0}^n\binom{n}{k}z_k\, u_{n-k}\right)_{\Sscript{n\,\geq \,0}}.
\end{equation}
The unit is the sequence $\Sqz$ with $z_0=1$ and $z_n=0$ for all $n\geq 1$. The augmentation is given by the projection on the $0$-th component. The vector space of  all sequences $\C^{\N}$ endowed with this algebra structure will be denoted by $\hH\C^{\N}$.

Recall also that given a Hopf algebra as $\C[X]$, we may consider its \emph{finite dual }\footnote{In the literature, it appears also under the names \emph{Sweedler dual} or \emph{continuous dual}, where continuity is with respect to the linear topology whose neighbourhood base at $0$ consists exactly of the finite-codimensional ideals,  see e.g.~\cite[\S3]{petersontaft}.} Hopf algebra $\C[X]^\bcirc$. This is the vector subspace of all linear maps which vanish on a finite-codimensional ideal (i.e., one that leads to a finite-dimensional quotient algebra). Here we will focus only on the case of our interest and we refer to \cite[Chapter 9]{Montgomery:1993} and \cite[Chapter VI]{sweedler} for a more extended treatment.

\begin{lemma}
Given the Hopf algebra $\C[X]$, the set
\begin{equation*}
\C[X]^\bcirc=\big\{f\in\C[X]^*\mid \ker{f}\supseteq I,\text{ for } I \text{ a non-zero ideal of }\C[X]\big\}
\end{equation*}
is an augmented subalgebra of $\C[X]^*$ which is also a Hopf algebra. The augmentation $\varepsilon_\bcirc$ is given by the restriction of $\varepsilon_*$. The comultiplication on $\C[X]^\bcirc$ is defined in such a way that for $f\in\C[X]^\bcirc$, we have
\begin{equation}\label{eq:comultiplication}
\Delta_\bcirc(f)=\sum f_1\otimes f_2 \; \iff \; \Big( f(pq)=\sum f_1(p)f_2(q),\, \text{ for all }\, p,q\in \C[X]\Big).
\end{equation}
The antipode is given by pre-composing with the one of $\C[X]$, i.e.,~$S_\bcirc(f)=f\circ S$ for all $f\in\C[X]^\bcirc$.
\end{lemma}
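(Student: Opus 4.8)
The statement is the specialization to $\C[X]$ of the general theorem that the finite dual of any Hopf algebra is again a Hopf algebra (see \cite[Chapter VI]{sweedler} and \cite[Chapter 9]{Montgomery:1993}); my plan is to show that the general construction produces exactly the structure maps displayed above, exploiting that $\C[X]$ is a principal ideal domain. The key simplification is that every non-zero ideal $I=(p)$ of $\C[X]$ is finite-codimensional, since $\dim_{\C}\big(\C[X]/(p)\big)=\deg(p)<\infty$. Hence the set in the statement coincides with the usual finite dual: the linear forms that factor through some finite-dimensional quotient algebra $\C[X]/(p)$ of $\C[X]$. I will use this factorization description throughout.

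First I would check that $\C[X]^\bcirc$ is an augmented subalgebra of $\C[X]^*$. The unit $\varepsilon$ of $\C[X]^*$ lies in $\C[X]^\bcirc$ because it vanishes on the finite-codimensional ideal $(X)$. If $f,g\in\C[X]^\bcirc$ vanish on $(p)$ and $(q)$ respectively, then $f*g=(f\otimes g)\circ\Delta$ vanishes on the kernel of the algebra map $\C[X]\xrightarrow{\Delta}\C[X]\otimes\C[X]\twoheadrightarrow\C[X]/(p)\otimes\C[X]/(q)$, which is a finite-codimensional ideal because its target is finite-dimensional; thus $f*g\in\C[X]^\bcirc$. Restricting the augmentation $\varepsilon_*$, which is evaluation at $1\in\C[X]$, then yields $\varepsilon_\bcirc$.

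The central step is to define the comultiplication. For $f\in\C[X]^\bcirc$ the form $f\circ m$ belongs to $(\C[X]\otimes\C[X])^*$, and I must show it lies in the image of the canonical injection $\iota\colon\C[X]^\bcirc\otimes\C[X]^\bcirc\hookrightarrow(\C[X]\otimes\C[X])^*$; then $\Delta_\bcirc(f)$ is the unique preimage, and unravelling $\iota$ gives precisely the characterization \eqref{eq:comultiplication}. This is where finite-codimensionality is indispensable: writing $f=\bar f\circ\pi$ with $\pi\colon\C[X]\to A:=\C[X]/(p)$ and $\bar f\in A^*$, the form $\bar f\circ m_A$ lies in $(A\otimes A)^*=A^*\otimes A^*$ because $A$ is finite-dimensional, and composing back with $\pi\otimes\pi$ exhibits $f\circ m$ as a finite sum $\sum f_1\otimes f_2$ whose factors again factor through $A$, hence belong to $\C[X]^\bcirc$. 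I expect this to be the main obstacle, since without finiteness the injection $\C[X]^*\otimes\C[X]^*\hookrightarrow(\C[X]\otimes\C[X])^*$ fails to be surjective and $f\circ m$ need not decompose.

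It remains to assemble these maps into a Hopf algebra. Coassociativity and counitality of $(\Delta_\bcirc,\varepsilon_\bcirc)$ follow by dualizing the associativity and unitality of the multiplication of $\C[X]$, once one checks that the canonical injections are compatible with the iterated maps on $\C[X]\otimes\C[X]\otimes\C[X]$. The bialgebra compatibility, namely that $\Delta_\bcirc$ and $\varepsilon_\bcirc$ are morphisms for the convolution product, is dual to the fact that the multiplication and unit of $\C[X]$ are coalgebra morphisms, which holds because $\C[X]$ is a bialgebra. Finally, since $\C[X]$ is commutative and $S(X)=-X$, the antipode $S$ is an involutive algebra automorphism and therefore carries finite-codimensional ideals to finite-codimensional ideals; hence $S_\bcirc(f)=f\circ S$ preserves $\C[X]^\bcirc$, and the antipode identities for $\C[X]^\bcirc$ are obtained by dualizing the antipode axiom $m\circ(S\otimes\id)\circ\Delta=u\circ\varepsilon=m\circ(\id\otimes S)\circ\Delta$ of $\C[X]$.
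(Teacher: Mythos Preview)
Your proof is correct and follows the standard construction of the finite dual Hopf algebra as in \cite[Chapter VI]{sweedler} and \cite[Chapter 9]{Montgomery:1993}. The paper itself does not prove this lemma at all: it is stated without proof as a well-known fact, with the general theory deferred to those same references, so your argument is essentially a fleshing-out of the citations rather than a different approach.
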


Since the algebra $\C[X]$ is a principal ideal domain, it turns out that the space of linearly recursive sequences $\cL in(\C)$ can be identified with  $\C[X]^\bcirc$ via the isomorphism $\Upphi$, whence it becomes an augmented subalgebra of $\hH\C^\N$ and a Hopf algebra.
For further reading, we refer the interested reader to \cite{petersontaft,futiamullertaft,taft} and \cite[Chapter 2]{underwood}.

\section{Two filtrations on the space of linearly recursive sequences}\label{sec:equivfiltrations}

In this and in the next section, we will implicitly make use of the identifications $\hH\C^\N=\C[X]^*$ and $\cL in(\C)=\C[X]^\bcirc$, via the isomorphism of algebras $\Upphi$ of Lemma \ref{lema:Fornaca}. 

As augmented algebras, both $\C[X]^*$ and $\C[X]^\bcirc$ inherit a natural filtration. Namely, if we let $I:=\ker{\varepsilon_*}$ and $J:=\ker{\varepsilon_\bcirc}$ be their \emph{augmentation ideals}, then we can consider $\C[X]^*$ and $\C[X]^\bcirc$ as filtered with the adic filtrations $F_n\left(\C[X]^*\right) = I^n$ and $F_n\left(\C[X]^\bcirc\right)=J^n$, $n\geq 0$.

Moreover, $\C[X]^\bcirc$ inherits a filtration $F'_{n}\left(\C[X]^\bcirc\right) = I^n\cap\C[X]^\bcirc$ induced from the canonical inclusion $\C[X]^\bcirc\subseteq \C[X]^*$ as well and it is clear that $F_n\left(\C[X]^\bcirc\right)\subseteq F_n'\left(\C[X]^\bcirc\right)$. Hence,  the $J$-adic filtration on $\C[X]^\bcirc$ is finer than the induced one.  As we will show in this section, it is in fact strictly finer. 

For every $\lambda\in\C$ we define $\phi_\lambda:\C[X]\to \C$ to be the algebra map such that $\phi_\lambda(X)=\lambda$. 
The set $G_a:=\mathsf{Alg}_\C\left(\C[X],\C\right)=\left\{\phi_\lambda\mid \lambda\in\C\right\}$ is a group with group structure given by
$$
\phi_{\lambda} \cdot \phi_{\lambda'} \,:=\, \phi_{\lambda} * \phi_{\lambda'} \,=\, \phi_{\lambda \, +\, \lambda'}, \qquad e_{G_a}\,:=\, \varepsilon\,=\,\phi_{0}, \qquad \left(\phi_\lambda\right)^{-1} \, :=\, \phi_\lambda\circ S \,=\,\phi_{-\lambda} .
$$

\begin{lemma}[{\cite[Example 9.1.7]{Montgomery:1993}}]\label{lemma:CGKMM}
Denote by $\xi$ the distinguished element in $\C[X]^*$ which satisfies $\xi(X^n)=\delta_{n,1}$ for all $n\geq 0$ (Kronecker's delta). Then the convolution product \eqref{Eq:convolution} induces an isomorphism of Hopf algebras 
\begin{equation}\label{eq:CGKMM}
\Uppsi:\C[\xi]\otimes \C G_a {\longrightarrow} \C[X]^\bcirc, \quad \Big( \xi^{n}\tensor{}\phi_{\lambda} \longmapsto \xi^{n} \, *\, \phi_{\lambda}\Big), 
\end{equation}
where $\C G_a$ is the group algebra on $G_a$ and $\C[\xi]$ is a polynomial Hopf algebra as in \S\ref{sec:linrecursion}.
\end{lemma}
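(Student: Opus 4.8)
The plan is to verify that $\Uppsi$ is a morphism of Hopf algebras and then to prove that it is bijective, the latter being the substantial part. The conceptual device on which everything rests is a concrete description of the image elements $\xi^n * \phi_\lambda$ as functionals on $\C[X]$. Computing first on the linear basis $\{X^m\}_{m\geq 0}$, the convolution formula \eqref{Eq:convolution} gives $\xi^n(X^m)=n!\,\delta_{n,m}$ and hence $(\xi^n * \phi_\lambda)(X^m)=\binom{m}{n}n!\,\lambda^{m-n}=\frac{m!}{(m-n)!}\lambda^{m-n}$ for $m\geq n$ (and $0$ otherwise). In other words, $\xi^n * \phi_\lambda$ is exactly the functional $q\mapsto q^{(n)}(\lambda)$ evaluating the $n$-th derivative at $\lambda$. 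I would record this computation at the outset, since it simultaneously shows that $\xi^n*\phi_\lambda$ kills the ideal $\big((X-\lambda)^{n+1}\big)$ and so genuinely lands in $\C[X]^\bcirc$, and it drives both injectivity and surjectivity below.

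Next I would check that $\Uppsi$ is a bialgebra morphism, which is routine given the description \eqref{eq:comultiplication} of $\Delta_\bcirc$. Since the convolution product is commutative (because $\binom{n}{k}=\binom{n}{n-k}$) and $\C[\xi]\otimes\C G_a$ is a commutative algebra, it suffices to work on the algebra generators $\xi\otimes\varepsilon$ and $\varepsilon\otimes\phi_\lambda$: one has $\Uppsi\big((\xi^n\otimes\phi_\lambda)(\xi^{n'}\otimes\phi_{\lambda'})\big)=\xi^{n+n'}*\phi_{\lambda+\lambda'}=(\xi^n*\phi_\lambda)*(\xi^{n'}*\phi_{\lambda'})$, using $\phi_\lambda*\phi_{\lambda'}=\phi_{\lambda+\lambda'}$, so $\Uppsi$ is an algebra map. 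For the coalgebra structure, \eqref{eq:comultiplication} together with the Leibniz rule $(pq)'(0)=p'(0)q(0)+p(0)q'(0)$ shows that $\xi$ is primitive, while each $\phi_\lambda$ is grouplike because it is an algebra map; thus $\Delta_\bcirc\circ\Uppsi$ and $(\Uppsi\otimes\Uppsi)\circ\Delta$ are two algebra maps agreeing on generators, hence equal, and likewise for the counits. A bialgebra morphism between Hopf algebras automatically respects antipodes, so $\Uppsi$ is a morphism of Hopf algebras.

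For injectivity I would prove that the family $\{\xi^n*\phi_\lambda\}$ is linearly independent. Suppose a finite combination $\sum_i\sum_n c_{i,n}(\xi^n*\phi_{\lambda_i})$ vanishes, the $\lambda_i$ being pairwise distinct. Evaluating on $X^m$ and using the formula above rewrites this, for every $m\geq 0$, as $\sum_i \lambda_i^m\,Q_i(m)=0$ with each $Q_i$ a polynomial in $m$ (the root $\lambda_i=0$, where $\xi^n*\phi_0=\xi^n$ contributes the finitely supported $n!\,\delta_{n,m}$, being handled separately by looking at large $m$). The classical linear independence of the exponential–polynomial sequences $\{m^j\lambda_i^m\}$ attached to distinct characteristic roots forces each $Q_i\equiv 0$, and since the falling factorials are linearly independent polynomials this gives $c_{i,n}=0$ for all $i,n$.

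The heart of the argument, and the step I expect to be most delicate, is surjectivity. Here I would exploit that $\C[X]$ is a principal ideal domain: any $f\in\C[X]^\bcirc$ kills a nonzero ideal $(p)$ with $p=\prod_{i=1}^s(X-\lambda_i)^{m_i}$, so $f$ lies in the finite-dimensional space $\left(\C[X]/(p)\right)^*$ of functionals vanishing on $(p)$. By the Chinese Remainder Theorem $\C[X]/(p)\cong\bigoplus_i\C[X]/\big((X-\lambda_i)^{m_i}\big)$, whence $\dim\left(\C[X]/(p)\right)^*=\sum_i m_i=\deg p$. The derivative description shows that, for $0\leq n<m_i$, the functional $\xi^n*\phi_{\lambda_i}\colon q\mapsto q^{(n)}(\lambda_i)$ kills $\big((X-\lambda_i)^{m_i}\big)\supseteq(p)$ and so lies in $\left(\C[X]/(p)\right)^*$; these $\sum_i m_i$ functionals are linearly independent by the previous paragraph, hence form a basis of $\left(\C[X]/(p)\right)^*$. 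Consequently $f$ is a linear combination of the $\xi^n*\phi_{\lambda_i}$, so $f\in\Img{\Uppsi}$. The main obstacle is precisely this local-to-global spanning step: it is where the PID structure of $\C[X]$ does the work, guaranteeing that every finite-codimensional ideal splits into primary pieces on whose duals the derivative functionals manifestly span.
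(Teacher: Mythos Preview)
Your proof is correct. The paper, however, does not actually prove this lemma: it merely cites \cite[Example 9.1.7]{Montgomery:1993} and then remarks that the statement is the specialization to $H=\C[X]^\bcirc$ of the Cartier--Gabriel--Kostant--Milnor--Moore structure theorem $H\cong U(P(H))\,\#\,\C G(H)$ for cocommutative Hopf algebras over an algebraically closed field of characteristic zero; here $P(\C[X]^\bcirc)=\C\xi$, $G(\C[X]^\bcirc)=G_a$, and commutativity collapses the smash product to an ordinary tensor product. Your route is genuinely different and more elementary: by identifying $\xi^n*\phi_\lambda$ with the derivative-evaluation functional $q\mapsto q^{(n)}(\lambda)$ you reduce bijectivity to the PID structure of $\C[X]$, the Chinese Remainder decomposition of $\C[X]/(p)$, and the classical linear independence of exponential--polynomial sequences. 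This buys a self-contained argument that avoids the heavy CGKMM machinery, at the cost of being tailored to the one-variable situation; conversely, the paper's citation explains conceptually why such a decomposition must exist for \emph{any} cocommutative Hopf algebra, not just $\C[X]^\bcirc$.
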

\noindent We denote by 
\begin{equation}\label{Eq:SanSecondo}
\xymatrix@C=35pt{\vartheta: \C[\xi] \,  \ar@<-0.2ex>@{^{(}->}^-{\uppsi}[r]  & \ar@<-0.2ex>@{^{(}->}^-{\iota}[r] \C[X]^\bcirc  \,  & \C[X]^* }
\end{equation}
the algebra monomorphism induced by $\Uppsi$. 

\begin{remark}
It is worthy to point out that Lemma \ref{lemma:CGKMM} is a particular instance of the renowned \emph{Cartier-Gabriel-Kostant-Milnor-Moore Theorem}, which states that for a cocommutative Hopf $\Bbbk$-algebra $H$ over an algebraically closed field $\Bbbk$ of characteristic zero, the multiplication in $H$ induces an isomorphism of Hopf algebras $U\left(P\left(H\right)\right)\#\, \C G\left(H\right) \cong H$, where the left-hand side is endowed with the smash product algebra structure (see \cite[Corollary 5.6.4 and Theorem 5.6.5]{Montgomery:1993}, \cite[Theorems 8.1.5, 13.0.1 and \S13.1]{sweedler} and \cite[Theorem 15.3.4]{radford}). 
\end{remark}

Denote by $\varepsilon_a:\C G_a\to \C$ the counit of the group algebra, which acts via $\varepsilon_a(\phi_\lambda)=\phi_\lambda(1)=1$ for all $\lambda\in\C$, and by $\varepsilon_\xi:\C[\xi]\to \C$ the counit of the polynomial algebra in $\xi$ defined by $\varepsilon_\xi(\xi)=0$. These maps are in fact  the restrictions of the counit $\varepsilon_\bcirc: \C[X]^\bcirc \to \C$ to the vector subspaces of $\C[X]^\bcirc$ generated by $G_a$ and $\left\{\xi^n\mid n\geq 0\right\}$, respectively. Thus, up to the isomorphism $\Uppsi$ of equation \eqref{eq:CGKMM}, we have $\varepsilon_\bcirc\,=\, \varepsilon_\xi \tensor{} \varepsilon_a$. 

\begin{lemma}\label{lemma:isograded}
The isomorphism $\Uppsi$ of \eqref{eq:CGKMM} induces an isomorphism of vector spaces
\begin{equation*}
\C\bar{\xi}\oplus \frac{\ker{\varepsilon_a}}{\ker{\varepsilon_a}^2}\cong \frac{J}{J^2},
\end{equation*}
where $\bar{\xi}=\xi+\langle \xi^2\rangle$ in the quotient $\langle \xi\rangle/\langle \xi^2\rangle$.
\end{lemma}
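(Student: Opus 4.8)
The plan is to deduce the statement from a general fact about the indecomposables (the ``cotangent space'') of a tensor product of augmented algebras, and then to identify the two resulting factors explicitly.

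First I would transport the problem through the algebra isomorphism $\Uppsi$. Since $\Uppsi$ carries the augmentation $\varepsilon_\bcirc = \varepsilon_\xi \otimes \varepsilon_a$ to the augmentation of the tensor product algebra with $A := \C[\xi]$ and $B := \C G_a$, the ideal $J$ corresponds to the augmentation ideal of $A \otimes B$. Writing $I_A = \ker{\varepsilon_\xi} = \langle \xi \rangle$ and $I_B = \ker{\varepsilon_a}$ and using the canonical splittings $A = \C 1 \oplus I_A$ and $B = \C 1 \oplus I_B$, one obtains the internal direct sum decomposition
\[
J = I_A \otimes B + A \otimes I_B = (I_A \otimes \C 1) \oplus (I_A \otimes I_B) \oplus (\C 1 \otimes I_B).
\]

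The main step is the computation of $J^2$. Writing $P = I_A \otimes B$ and $Q = A \otimes I_B$, so that $J = P + Q$ and $J^2 = P^2 + PQ + QP + Q^2$, and using the elementary identities $B^2 = B$, $A^2 = A$, $I_A A = I_A$ and $B I_B = I_B$, I would check that $P^2 = I_A^2 \otimes B$, $Q^2 = A \otimes I_B^2$ and $PQ = QP = I_A \otimes I_B$. Decomposing once more along the splittings of $A$ and $B$, this yields
\[
J^2 = (I_A^2 \otimes \C 1) \oplus (I_A \otimes I_B) \oplus (\C 1 \otimes I_B^2).
\]
The delicate point, and the heart of the argument, is that the entire mixed summand $I_A \otimes I_B$ already belongs to $J^2$ --- it is precisely the cross-term $PQ$ --- so that it collapses to zero in the quotient.

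Comparing the two displayed decompositions term by term then gives a natural isomorphism
\[
J/J^2 \cong I_A/I_A^2 \oplus I_B/I_B^2.
\]
It remains to identify the factors. Since $\C[\xi]$ is a polynomial algebra one has $I_A^2 = \langle \xi \rangle^2 = \langle \xi^2 \rangle$, so that $I_A/I_A^2 = \langle \xi \rangle/\langle \xi^2 \rangle = \C \bar{\xi}$ is one-dimensional, while $I_B/I_B^2 = \ker{\varepsilon_a}/\ker{\varepsilon_a}^2$ by definition; this is exactly the asserted isomorphism. I expect the only genuine obstacle to be the bookkeeping in computing $J^2$: one must verify that the four products $P^2$, $PQ$, $QP$, $Q^2$ are evaluated correctly and, above all, that the cross-terms absorb the \emph{whole} of $I_A \otimes I_B$, since this is what forces the tensor part to disappear from the cotangent space.
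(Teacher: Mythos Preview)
Your argument is correct. Both you and the paper begin by transporting $J$ through $\Uppsi$ to the augmentation ideal $K$ of $\C[\xi]\otimes\C G_a$ and then compute $K/K^2$; the difference is in how that quotient is identified. The paper invokes a general graded-ring fact (\cite[Lemma~VIII.2]{nastasescuoystaeyen}) asserting that $\mathsf{gr}(\C[\xi])\otimes\mathsf{gr}(\C G_a)\cong\mathsf{gr}(\C[\xi]\otimes\C G_a)$ and reads off the degree-$1$ component, while you prove the degree-$1$ case by hand: you split $A\otimes B$ along the augmentations, write $J=P+Q$ with $P=I_A\otimes B$ and $Q=A\otimes I_B$, and compute $P^2,PQ,QP,Q^2$ explicitly to see that $I_A\otimes I_B$ is absorbed in $J^2$. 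Your route is more elementary and fully self-contained; the paper's route is shorter on the page and situates the result inside the general isomorphism of associated gradeds for tensor products of filtered algebras. Either way, the key mechanism is the same: the cross-term $I_A\otimes I_B$ already lies in $J^2$, so the cotangent space of the tensor product is the direct sum of the two cotangent spaces.
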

\begin{proof}
First of all, as $\Uppsi$ is an isomorphism of Hopf algebras, it induces an isomorphism of vector spaces between $J/J^2$ and $\ker{\varepsilon_{\xi}\otimes \varepsilon_a}/\ker{\varepsilon_{\xi}\otimes \varepsilon_a}^2$. Set $K:=\ker{\varepsilon_{\xi}\otimes \varepsilon_a}$. The family of assignments
\begin{equation*}
\frac{\langle \xi^k\rangle}{\langle \xi^{k+1}\rangle}\otimes\frac{\ker{\varepsilon_a}^h}{\ker{\varepsilon_a}^{h+1}}\longrightarrow \frac{K^{n}}{K^{n+1}}; \quad \bigg[\left(\xi^k+\langle \xi^{k+1}\rangle\right)\otimes \left(x+\ker{\varepsilon_a}^{h+1}\right)\longmapsto \left(\xi^k\otimes x\right)+ K^{n+1}\bigg]
\end{equation*}
for $h,k\geq 0$ and $n=h+k$ induces a graded isomorphism of graded vector spaces 
$$
\mathsf{gr}(\C[\xi])\otimes \mathsf{gr}(\C G_a) \,\cong\, \mathsf{gr}\big(\C[\xi]\otimes \C G_a\big),
$$
see e.g.~\cite[Lemma VIII.2]{nastasescuoystaeyen}. In particular, the degree $1$ component of this together with $\Uppsi$ induce the stated isomorphism
\begin{equation*}
\C\bar{\xi}\oplus \frac{\ker{\varepsilon_a}}{\ker{\varepsilon_a}^2}\cong \left(\frac{\langle \xi\rangle}{\langle \xi^2\rangle}\otimes \frac{\C G_a}{\ker{\varepsilon_a}}\right)\oplus \left(\frac{\C[\xi]}{\langle \xi\rangle}\otimes\frac{\ker{\varepsilon_a}}{\ker{\varepsilon_a}^2}\right)\cong \frac{K}{K^2} \cong \frac{J}{J^2}. \qedhere
\end{equation*}
\end{proof}

The key fact is that the quotient ${\ker{\varepsilon_a}}/{\ker{\varepsilon_a}^2}$ does not vanish, as we will show in the subsequent lemma. To this aim, recall that there is an algebra isomorphism 
\begin{equation}\label{Eq:LaRadio}
\Theta:\C[X]^*\longrightarrow  \C[[Z]], \quad \Big( f \longmapsto \sum_{k\geq 0}f(e_k)Z^k \Big),
\end{equation}
where $e_k=X^k/k!$ for all $k\geq 0$. Notice that $\Theta \circ \vartheta(\xi)=Z$, where $\vartheta$ is the morphism given in  \eqref{Eq:SanSecondo}.

\begin{lemma}\label{lemma:notvanish}
The element $\phi_1-\varepsilon+\ker{\varepsilon_a}^2$ is non-zero in the quotient ${\ker{\varepsilon_a}}/{\ker{\varepsilon_a}^2}$.
\end{lemma}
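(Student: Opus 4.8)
The plan is to produce an explicit $\C$-linear functional on the group algebra $\C G_a$ that kills $\ker{\varepsilon_a}^2$ but not $\phi_1-\varepsilon$, so that the latter cannot lie in $\ker{\varepsilon_a}^2$. First note that, since $\varepsilon=\phi_0$ is the neutral element of $G_a$, the difference $\phi_1-\varepsilon=\phi_1-\phi_0$ does belong to $\ker{\varepsilon_a}$, because $\varepsilon_a(\phi_1-\phi_0)=1-1=0$.

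Because the characters $\left\{\phi_\lambda\mid\lambda\in\C\right\}$ form a $\C$-basis of the group algebra $\C G_a$, I can define a linear map $\ell:\C G_a\to\C$ by setting $\ell(\phi_\lambda)=\lambda$ for every $\lambda\in\C$. Morally $\ell$ is the infinitesimal character obtained by differentiating $\lambda\mapsto\phi_\lambda$ at the origin, and by construction $\ell(\phi_1-\varepsilon)=1-0=1\neq 0$.

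The crux is to check that $\ell$ annihilates $\ker{\varepsilon_a}^2$, for which it is enough to see that $\ell(xy)=0$ whenever $x,y\in\ker{\varepsilon_a}$. Writing $x=\sum_i c_i\phi_{\lambda_i}$ and $y=\sum_j d_j\phi_{\mu_j}$ with $\sum_i c_i=0=\sum_j d_j$, the group law $\phi_{\lambda}*\phi_{\mu}=\phi_{\lambda+\mu}$ yields $xy=\sum_{i,j}c_id_j\,\phi_{\lambda_i+\mu_j}$, so that
\[
\ell(xy)=\sum_{i,j}c_id_j(\lambda_i+\mu_j)=\Big(\sum_i c_i\lambda_i\Big)\Big(\sum_j d_j\Big)+\Big(\sum_i c_i\Big)\Big(\sum_j d_j\mu_j\Big)=0,
\]
using additivity of $\lambda\mapsto\lambda$ together with the two augmentation conditions. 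Hence $\ell$ descends to $\ker{\varepsilon_a}/\ker{\varepsilon_a}^2$ and separates the class of $\phi_1-\varepsilon$ from $0$.

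I do not anticipate a genuine obstacle: the only creative input is the choice of $\ell$, which reflects the classical isomorphism $\ker{\varepsilon_a}/\ker{\varepsilon_a}^2\cong(G_a)_{\mathrm{ab}}\otimes_{\mathbb{Z}}\C$ for the augmentation ideal of a group algebra. As $G_a\cong(\C,+)$ is abelian and torsion-free, the class of $\phi_1-\varepsilon$ corresponds to $1\otimes 1\neq 0$; the functional $\ell$ merely realizes this pairing concretely and lets us bypass the general theory.
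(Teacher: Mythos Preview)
Your proof is correct and takes a genuinely different route from the paper's. The paper argues by contradiction: if $\phi_1-\varepsilon$ lay in $\ker{\varepsilon_a}^2$, then via the isomorphism $\Uppsi$ it would lie in $J^2\subseteq I^2$, and pushing through the isomorphism $\Theta:\C[X]^*\to\C[[Z]]$ this would force $\sum_{k\geq 1}Z^k/k!\in\langle Z^2\rangle$, contradicting the nonvanishing of the linear term. Your argument, by contrast, stays entirely inside the group algebra $\C G_a$ and produces an explicit linear functional $\ell(\phi_\lambda)=\lambda$ that separates $\phi_1-\varepsilon$ from $\ker{\varepsilon_a}^2$; the key computation $\ell(xy)=0$ for $x,y\in\ker{\varepsilon_a}$ is clean and uses nothing beyond the additivity of the group law. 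Your approach is more elementary and self-contained, requiring none of the ambient machinery ($\Uppsi$, $\Theta$, the power-series identification), while the paper's approach has the virtue of foreshadowing exactly the power-series viewpoint that drives the rest of the argument in \S\ref{sec:equivfiltrations}. The closing remark invoking $(G_a)_{\mathrm{ab}}\otimes_{\mathbb{Z}}\C$ is correct in spirit but unnecessary, since your explicit $\ell$ already does all the work.
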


\begin{proof}
Assume by contradiction that $\phi_1-\varepsilon\in \ker{\varepsilon_a}^2$. By applying  $\Uppsi$, this implies that $\phi_1-\varepsilon\in J^2$, whence $\phi_1-\varepsilon\in I^2$ in $\C[X]^*$. Since $\Theta$ induces a bijection between $I^n$ and $\langle Z^n\rangle \subseteq\C[[Z]]$ for all $n\geq 1$, claiming that $\phi_1-\varepsilon\in I^2$ in $\C[X]^*$ would imply that $\sum_{k\geq 1}Z^k/k!\in \langle Z ^{2}\rangle$, which is a contradiction. Thus,  $\phi_1-\varepsilon\notin \ker{\varepsilon_a}^2$.
\end{proof}

It follows from Lemma \ref{lemma:isograded} and Lemma \ref{lemma:notvanish} that the elements $\xi+J^2$ and $\phi_1-\varepsilon+J^2$ are linearly independent in $J/J^2$. In particular, $\phi_1-\varepsilon-\xi\notin J^2$. However, $\phi_1-\varepsilon-\xi$ as an element of $\C[X]^*$ maps $e_0=1$ and $e_1=X$ to $0$ and it maps $e_n=X^n/n!$ to $1/n!$ for all $n\geq 2$. Hence $\phi_1-\varepsilon-\xi=\xi^2*h_{(2)}\in I^2$, where for every $k\geq 0$
\begin{equation*}
h_{(k)}(e_n):=\frac{1}{(n+k)!} \quad \text{ for all }n\geq 0.
\end{equation*}
Indeed,
\begin{equation*}
(\xi^2*h_{(2)})\left(e_n\right)=\sum_{i+j=n}\xi^2(e_i)h_{(2)}(e_j)=\begin{cases}0 & n=0,1 \\ \frac{1}{n!} & n\geq 2\end{cases}
\end{equation*}
This shows that $\phi_1-\varepsilon-\xi$ is an element in $\C[X]^\bcirc\cap I^2$ but not in $J^2$, so that $J^2\subsetneq \C[X]^\bcirc\cap I^2$. Now, by induction one may see that for every $n\geq 1$ the element
\begin{equation}\label{Eq:CarloAlberto}
\phi_{1}-\left( \sum_{k=0}^{n-1}\frac{1}{k!}\xi^{k}\right) \, = \, \xi^n*h_{(n)}\, \in \, I^{n}\cap \C[X]^\bcirc
\end{equation}
does not belong to $J^{n}$, so that the two filtrations do not coincide. We point out that, under the isomorphism $\C[X]^{*}\cong \C[[Z]]$ of equation \eqref{Eq:LaRadio}, the element of equation 
\eqref{Eq:CarloAlberto} corresponds to
\begin{equation*}
\mathsf{exp}(Z)-\left( \sum_{k=0}^{n-1}\frac{1}{k!}Z^{k}\right)=Z^n\cdot\left(\sum_{k\geq 0}\frac{1}{{(n+k)}!}Z^{k}\right).
\end{equation*}

Summing up, we have shown that  the $J$-adic filtration on $\C[X]^{\bcirc}$ is strictly finer than the filtration induced from the inclusion $\iota:\C[X]^\bcirc\to\C[X]^{*}$.

\section{Comparing the two topologies on \texorpdfstring{$\C[X]^\bcirc$}{}}

Recall that a filtration on an algebra naturally induces on it a linear topology, whose neighbourhood base at $0$ is given exactly by the elements of the filtration (see for example \cite[III.49, Example 3]{bourbaki:top} or \cite[\S I, Chapter D]{nastasescuoystaeyen}).
Furthermore, given an algebra $A$ endowed with the $\m$-adic filtration associated to an ideal $\m\subseteq A$, the \emph{completion} of $A$ with respect to the linear topology induced by this filtration is, by definition, $\widehat{A}=\prlimit{n}{A/\m^n}$, i.e., the projective limit of the projective system $A/\m^n$ with the obvious projection maps $A/\m^n\twoheadrightarrow A/\m^m$ for $n\geq m$. An algebra $A$ is said to be \emph{Hausdorff and complete} if the canonical map $ A \to \what{A}$ is an isomorphism. For further details, we refer to \cite[\S II, Chapter D]{nastasescuoystaeyen}.

\begin{example}\label{ex:C[X]*}
For every $n\geq 0$, there is a linear isomorphism between ${\C[X]^*}/{I^{n+1}}$ and the linear dual of the vector subspace $\C[X]_{\leq \, n}\subseteq \C[X]$ of all polynomials of degree up to $n$. These in turn induce an isomorphism
\begin{equation}\label{eq:complete}
\what{\C[X]^{*}}\,=\,\prlimit{n}{\frac{\C[X]^*}{I^{n+1}}} \,\cong\, \C[X]^*
\end{equation}
by which we conclude that $\C[X]^*$ is complete with respect to the $I$-adic topology. 
\end{example}

\begin{remark}\label{rem:Mucholio}
Let us consider again the algebra monomorphism $\vartheta:\C[\xi]\to \C[X]^{*}$ of equation \eqref{Eq:SanSecondo}. Since $\vartheta(\xi)\in \ker{\varepsilon_{*}}=I$, we have that $\vartheta$ is a filtered morphism of filtered algebras and so we may consider its completion $\what{\vartheta}:\what{\C[\xi]}\to \what{\C[X]^{*}}\cong \C[X]^{*}$. Therefore, up to the canonical identification $\C[[\xi]] = \C[[Z]]$, the map $\what{\vartheta}$ turns out to be the inverse of $\Theta$. 
A useful consequence of this is that every element $g\in\C[X]^*$ can be written as 
\begin{equation}\label{Eq:Valentino}
g\, =\, \sum_{k\geq 0}g(e_k)\xi^k,
\end{equation}
where as before $e_k=X^k/k!$ for all $k\geq 0$. By the right-hand side of equation \eqref{Eq:Valentino}, we mean the image in $\C[X]^*$ of the element
\begin{equation*}
\left(\sum_{k=0}^{n}g(e_k)\xi^k+I^{n+1}\right)_{n\geq 0}=\lim_{n\to \infty}\left(\sum_{k=0}^{n}g(e_k)\xi^k\right)
\end{equation*}
via the isomorphism \eqref{eq:complete}. Since $\xi^i\left(e_j\right)=\delta_{i,j}$ for all $i,j\geq 0$, given any $p=\sum_{i=0}^tp_ie_i\in\C[X]$ the sequence $\left(\sum_{k=0}^{n}g(e_k)\xi^k\right)(p)$, $n\geq 0$, eventually becomes constant and it equals the element $\sum_{i=0}^tp_ig(e_i)=g(p)$.  In light of this interpretation, $I^n=\langle \xi^n\rangle$ for all $n\geq 0$, in the algebra $\C[X]^{*}$.
\end{remark}

We already know from \S \ref{sec:equivfiltrations} that the $J$-adic filtration on $\C[X]^\bcirc$ do not coincide with the one induced by the inclusion $\C[X]^\bcirc\subseteq\C[X]^{*}$. Nevertheless, the topologies they induce may still be equivalent ones (that is, the two filtrations may be equivalent). Our next aim is to show that these topologies are not even equivalent, by showing that the $J$-adic completion of $\C[X]^\bcirc$ is not homeomorphic to $\C[X]^{*}$ via the completion of the inclusion map $\iota:\C[X]^\bcirc\to \C[X]^*$.

\begin{remark}\label{rem:densityEN}
It is worthy to mention that $\C[X]^\bcirc$ is dense in $\C[X]^{*}$ with respect to the finite topology on $\C[X]^{*}$ (the one induced by the product topology on $\C^{\C[X]}$), see for instance \cite[Exercise 1.5.21]{dascalescu}. On the other hand, since for every $f\in\C[X]^*$ and for all $n\geq 0$, we have that $f+\langle \xi^n\rangle = \mathcal{O}\left(f;e_0,e_1,\ldots,e_{n-1}\right)$, the space of linear maps which coincide with $f$ on $e_0,e_1,\ldots,e_{n-1}$, it turns out that the $I$-adic topology on $\C[X]^{*}$ is coarser then the linear one. It follows then that $\C[X]^\bcirc\subseteq \C[X]^*$ is dense with respect to the $I$-adic topology as well and hence one may check that
\begin{equation*}
\prlimit{n}{\frac{\C[X]^\bcirc}{\C[X]^\bcirc\cap I^n}}\,\cong\, \prlimit{n}{\frac{\C[X]^*}{I^n}}\,\cong\, \C[X]^*.
\end{equation*}
\end{remark}

Now, consider the completion $\what{\uppsi}: \C[[\xi]] \to \what{\C[X]^{\circ}}$, where $\uppsi$ is the filtered monomorphism of algebras given in \eqref{Eq:SanSecondo}.  In view of Remark \ref{rem:Mucholio}, one shows that $\what{\iota} \circ \what{\uppsi} \, = \, \what{\vartheta}$. Therefore, $\what{\iota}$ is a split epimorphism, as $\what{\vartheta}$ is an homeomorphism whose inverse is $\Theta$.

\begin{remark} 
In fact  $\C[X]^*$ is a complete Hopf algebra in the sense of \cite[Appendix A]{quillen} and $\what{\iota}:\what{\C[X]^\bcirc}\to \C[X]^*$ becomes an effective epimorphism of complete Hopf algebras (see \cite[Proposition 2.19, page 274]{quillen}).
\end{remark}

The subsequent proposition gives conditions under which $\what{\iota}$ becomes an homeomorphism.  

\begin{proposition}\label{lema:losEmanueles}
The following assertions are equivalent
\begin{enumerate}
\item the canonical map $\what{\iota}:\what{\C[X]^\bcirc}\to \C[X]^*$ is injective,
\item the canonical map $\what{\iota}:\what{\C[X]^\bcirc}\to \C[X]^*$ is an homeomorphism,
\item the $J$-adic and the induced filtrations on $\C[X]^\bcirc$ coincide,
\item the $J$-adic and the induced topologies on $\C[X]^\bcirc$ are equivalent.
\end{enumerate}
\end{proposition}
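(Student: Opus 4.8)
The plan is to establish the chain of implications $(2)\Rightarrow(1)$, $(1)\Rightarrow(3)$, $(3)\Rightarrow(4)$, and $(4)\Rightarrow(2)$, exploiting the structure already set up in the excerpt. The implications $(2)\Rightarrow(1)$ and $(3)\Rightarrow(4)$ are essentially formal: a homeomorphism is in particular injective, and if two filtrations coincide then a fortiori the topologies they induce are equivalent. So the genuine content lies in $(1)\Rightarrow(3)$ and in $(4)\Rightarrow(2)$.

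\medskip

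For $(1)\Rightarrow(3)$, I would argue by contraposition, leaning on the explicit computations of \S\ref{sec:equivfiltrations}. Suppose the filtrations do not coincide, so that $J^n\subsetneq \C[X]^\bcirc\cap I^n$ for some $n\geq 1$; concretely, equation \eqref{Eq:CarloAlberto} exhibits elements $\xi^n*h_{(n)}\in I^n\cap\C[X]^\bcirc$ lying outside $J^n$. The idea is to use such an element to manufacture a nonzero element of the kernel of $\what{\iota}$. Since $\what{\iota}$ is the completion of the inclusion, an element of $\what{\C[X]^\bcirc}=\prlimit{n}{\C[X]^\bcirc/J^n}$ is a compatible sequence of cosets $(g_n+J^n)_n$, and it lies in $\ker{\what{\iota}}$ precisely when each representative lands in $\C[X]^\bcirc\cap I^n$ inside $\C[X]^*$, i.e.\ when the image sequence in $\prlimit{n}{(\C[X]^\bcirc\cap I^n)}$ is compatible while being nontrivial in the $J$-adic tower. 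The strict inclusions $J^n\subsetneq\C[X]^\bcirc\cap I^n$ give exactly the room needed to assemble a coherent nonzero element of the kernel, so injectivity of $\what{\iota}$ forces the filtrations to agree. Here I would invoke the standard fact that $\ker{\what{\iota}}=\prlimit{n}{(\C[X]^\bcirc\cap I^n)/J^n}$ (the $\varprojlim$ of the quotients of the two filtrations).

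\medskip

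For $(4)\Rightarrow(2)$, recall from the discussion preceding the proposition that $\what{\iota}\circ\what{\uppsi}=\what{\vartheta}$ and that $\what{\vartheta}$ is a homeomorphism with inverse $\Theta$; consequently $\what{\iota}$ is always a split epimorphism. Thus to upgrade $\what{\iota}$ to a homeomorphism it suffices to prove it is injective, and then that it is open (equivalently, a homeomorphism onto its image, which is all of $\C[X]^*$). If the two topologies are equivalent, each filtration is cofinal in the other, so the two projective systems $\{\C[X]^\bcirc/J^n\}$ and $\{\C[X]^\bcirc/(\C[X]^\bcirc\cap I^n)\}$ are pro-isomorphic and hence have canonically homeomorphic limits. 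Combining this with the identification $\prlimit{n}{\C[X]^\bcirc/(\C[X]^\bcirc\cap I^n)}\cong\C[X]^*$ from Remark \ref{rem:densityEN}, the completion $\what{\iota}$ is identified with this homeomorphism, giving $(2)$.

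\medskip

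The main obstacle I anticipate is the bookkeeping in $(1)\Rightarrow(3)$: one must pass from the failure of the filtrations to coincide at a single level $n$ to the existence of a genuinely nonzero element of the inverse limit $\ker{\what{\iota}}$, which requires lifting the offending element \eqref{Eq:CarloAlberto} to a \emph{compatible} system across all levels rather than just at level $n$. The computations with $h_{(n)}$ and the exponential expansion $\mathsf{exp}(Z)-\sum_{k=0}^{n-1}Z^k/k!$ already suggest the right coherent family, so the work is to verify compatibility of the cosets and nonvanishing in the limit, i.e.\ that the Mittag-Leffler-type obstruction really produces a nonzero kernel element. The remaining implications should then follow formally from the split-epimorphism structure of $\what{\iota}$ together with Remark \ref{rem:densityEN}.
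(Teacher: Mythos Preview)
Your cycle $(2)\Rightarrow(1)\Rightarrow(3)\Rightarrow(4)\Rightarrow(2)$ differs from the paper's $(1)\Rightarrow(2)\Rightarrow(3)\Rightarrow(4)\Rightarrow(1)$, and the difference is not cosmetic: your step $(1)\Rightarrow(3)$ has a real gap, which you yourself flag as ``the main obstacle'' without resolving it.

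Arguing $(1)\Rightarrow(3)$ by contraposition means: from the abstract hypothesis ``$J^n\subsetneq I^n\cap\C[X]^\bcirc$ for some $n$'' you must produce a nonzero element of $\ker{\what{\iota}}=\varprojlim_n\,(I^n\cap\C[X]^\bcirc)/J^n$. Nonvanishing of a single term (or even of all terms) of an inverse system does not force the limit to be nonzero; you would need a Mittag--Leffler-type argument or an explicit coherent lift. Appealing to the elements of \eqref{Eq:CarloAlberto} here is circular: those computations are precisely what establish that (3) fails in this specific instance, and the proposition is meant to be an abstract equivalence that is \emph{afterwards} combined with \S\ref{sec:equivfiltrations} to conclude that none of (1)--(4) hold. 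Using the specific witness inside the proof of the equivalence conflates the two steps.

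The paper sidesteps this entirely. Since $\what{\uppsi}\circ\Theta$ is a continuous section of $\what{\iota}$, injectivity of $\what{\iota}$ immediately yields bijectivity with continuous inverse, i.e.\ a homeomorphism; this is $(1)\Rightarrow(2)$. Then $(2)\Rightarrow(3)$ is read off directly: the homeomorphism gives, for each $n$, an isomorphism $\C[X]^\bcirc/J^n\cong \what{\C[X]^\bcirc}/F_n(\what{\C[X]^\bcirc})\cong \C[X]^*/I^n$ which on elements is $p+J^n\mapsto \iota(p)+I^n$, forcing $J^n=I^n\cap\C[X]^\bcirc$. Finally the paper closes the loop with $(4)\Rightarrow(1)$ by a short elementwise argument: if every $J^n$ contains some $I^m\cap\C[X]^\bcirc$, then any $(f_n+J^n)_n\in\ker{\what{\iota}}$ satisfies $f_n+J^n=f_m+J^n\in J^n$ and hence vanishes. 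Your $(4)\Rightarrow(2)$ via cofinality and Remark~\ref{rem:densityEN} is fine, but rerouting $(1)\Rightarrow(3)$ through $(2)$ as the paper does is what removes the obstacle you identified.
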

\begin{proof}
We already observed that $\what{\uppsi} \circ \Theta$ is a continuous section of $\what{\iota}$. Thus, if  $\what{\iota}$ injective then it will be bijective with inverse $\what{\uppsi} \circ \Theta$, and so an homeomorphism. This proves the implication $(1)\Rightarrow (2)$.

To show that $(2)\Rightarrow(3)$, let us denote by $F_{n}\left(\what{\C[X]^\bcirc}\right) =\ker{ \what{\C[X]^\bcirc} \to \C[X]^\bcirc/J^{n} }$ the canonical filtration on $\what{\C[X]^\bcirc}$. If $\what{\iota}$ is an homeomorphism, then its inverse has to be $\what{\uppsi} \circ \Theta$. As a consequence, we obtain the second of the following chain of isomorphisms 
$$
\frac{\C[X]^\bcirc}{J^{n}} \, \cong\, \frac{\what{\C[X]^\bcirc}}{F_{n}\left(\what{\C[X]^\bcirc}\right)}  \,\cong \, \frac{\C[X]^*}{I^n}, 
$$
for every $n \geq 1$. Their composition sends $ p(x) + J^{n} \in \C[X]^\bcirc/ J^{n}$ to $\iota(p(x)) + I^{n} \in \C[X]^*/I^n$, which shows that $J^{n}\, =\, I^{n} \cap \C[X]^\bcirc$. Thus the two filtrations coincide. 

Since the implication $(3)\Rightarrow (4)$ is clear, let us show that $(4)\Rightarrow (1)$. Saying that the two topologies are equivalent, implies that every $J^n$ (which is open in the $J$-adic topology) has to be open in the induced topology as well. In particular, it has to contain an element of the neighbourhood base of $0$. Therefore, we may assume that for every $n \geq 0$, there exists $m \geq n$ such that $I^{m}\cap \C[X]^\bcirc \, \subseteq \, J^{n}$. Given $(f_{n}+ J^{n})_{n \geq 0}$ an element in the kernel of $\what{\iota}$, we have that $f_{n} \in I^{n}$ for every $n \geq0$.  This implies that for every $n \geq 0$, there exists $m \geq n$ such that 
$$
f_{n}+ J^{n}\, =\, f_{m} + J^{n}  \,\in\,  \left(I^{m} \cap \C[X]^\bcirc\right) + J^{n} \,= \, J^{n},
$$
which means that $(f_{n}+ J^{n})_{n \geq 0}\,=\, 0$ and this settles the proof.
\end{proof}

In conclusion, it follows from the result of \S\ref{sec:equivfiltrations} that none of the equivalent conditions in Proposition \ref{lema:losEmanueles} holds, as the two filtrations do not coincide. An explicit non-zero element which lies in the kernel of $\what{\iota}$ is exactly the one coming from equation \eqref{Eq:CarloAlberto}. Indeed, on the one hand
\begin{equation*}
\left(\phi_1-\sum_{k=0}^n\frac{1}{k!}\xi^k+J^{n+1}\right)_{n\geq 0} \,\in\, \what{\C[X]^\bcirc}
\end{equation*}
is non-zero, but on the other hand a direct check shows that
\begin{equation*}
\what{\iota}\left(\left(\phi_1-\sum_{k=0}^n\frac{1}{k!}\xi^k+J^{n+1}\right)_{n\geq 0}\right)=\left(\phi_1-\sum_{k=0}^n\frac{1}{k!}\xi^k+I^{n+1}\right)_{n\geq 0}=0
\end{equation*}
in $\C[X]^*$.

\begin{remark}
Observe that an element $\left(f_n+J^{n+1}\right)_{n\geq0}$ in $\what{\C[X]^\bcirc}$ can be considered as the formal limit $\limn \left(f_n\right)$ of the Cauchy sequence $\left\{f_n\mid n\geq 0\right\}$ in $\C[X]^\bcirc$ with the $J$-adic topology. The element $\left(\phi_1+J^{n+1}\right)_{n\geq 0}$ can be identified with $\phi_1$ itself, as limit of a constant sequence. On the other hand, the element $\left(\sum_{k=0}^n\xi^k/k!+J^{n+1}\right)_{n\geq 0}$ can be considered as the limit $\limn \left(\sum_{k=0}^n\xi^k/k!\right)$. As we already noticed, $\phi_1$ is associated with the exponential function, in the sense that its power series expansion in $\C[X]^*$ is $\sum_{k\geq 0}\xi^k/k!=\mathsf{exp}(\xi)$. However, it follows from what we showed that in $\what{\C[X]^\bcirc}$ the Cauchy sequence $\left\{\sum_{k=0}^n\xi^k/k!\mid n\geq 0\right\}$ does not converge to $\phi_1$.
\end{remark}

\section{Final remarks}\label{sec:finalremarks}

As we mentioned in the introduction, linearly recursive sequences have already been studied deeply as ``rational'' power series. What we plan to do in this section is to provide a possible explanation of why the topological richness expounded in the previous sections  didn't enter the picture before and to provide an overview of the different interpretations of these sequences.

The commutative diagram of algebras in \eqref{Eq:PornComb} summarizes the state of the art. Therein, $\C^{\N}$ is endowed with the algebra structure given by the product $\Sqx \, \Sqy =\big(\sum_{k=0}^{n} x_{k}y_{n-k}\big)_{\Sscript{n \geq 0}}$.
\begin{equation}\label{Eq:PornComb}
\begin{gathered}
\xymatrix@R=15pt{   \cL in(\C)  \ar@{_(->}[rd]  \ar@{->}^-{\cong}[rrr] & & & \C[X]^{\circ}   \ar@{_(->}^{\iota}[rd]  &  \\ & \hH\C^{\N}  \ar@{->}^{\Upphi}[rrr]   \ar@{->}^-{\cong}_-{\zeta}[dd] & & & \C[X]^{*}  \ar@{->}_-{\cong}^{\Theta}[dd]  \\ & & & &  \\  & \C^{\N}  \ar@{->}^{\Omega}[rrr] & & & \C[[Z]]  \\   \cL in(\C) \ar@{^(->}[ru]   \ar@{->}_-{\cong}^{\omega}[rrr] & & & \C[Z]_{\langle Z \rangle}  \ar@{^(->}[ru]  &  }
\end{gathered}
\end{equation}
The isomorphism $\Omega$ sends any sequence $\Sqx$ to the power series $\sum_{\Sscript{n\, \geq 0}}x_{n}Z^{n} $, while the isomorphism $\zeta$ sends a sequence $\Sqz$ to the sequence $(z_{n}/n!)_{\Sscript{n \, \geq 0}}$. The algebra $ \C[Z]_{\langle Z \rangle} $ denotes the localization of $\C[Z]$ at the maximal ideal $\langle Z \rangle$, that is, the set of fractions $p(Z)/q(Z)$ with $q(0) \neq 0$. Lastly, the isomorphism  $\omega$ is induced by  the restriction of $\Omega$ to $\cL in (\C)$ and it is given as follows. For a sequence $\Sqa$ in $\cL in (\C)$, let  $c_{r}=1, c_{r-1},\ldots,c_0  \in\C$, $r\geq 1$ be the family of constant coefficients such that
\begin{equation*}
a_{l+r} + c_{r-1}a_{l+r-1} + c_{r-2} a_{l+r-2} + \cdots + c_0 a_{l}\, =\, 0, \qquad \text{for all } l\geq 0.
\end{equation*}
If we consider the polynomials $q(Z)=\sum_{i=0}^{r} c_{r-i}Z^{i}$ and $p(Z)=\sum_{j=0}^{r-1}\big( \sum_{i=0}^{j} c_{r-i} a_{j-i}\big) Z^{j}$, then $q(Z)\big(\sum_{\Sscript{n\, \geq 0}}a_{n}Z^{n} \big)\, =\, p(Z)$. Thus, $\omega$ acts via $\omega(\Sqa) := p(Z)/q(Z) \, \in \C[Z]_{\langle Z \rangle}$. 

As one can realize from diagram \eqref{Eq:PornComb}, there are essentially two linear topologies which can be induced on $\cL in (\C)$: one from  $\hH\C^{\N}$, which we denote by $\tT_{\Sscript{\hH}}$, and the other from $\C^{\N}$, which we denote by $\tT$. Apart from these, $\cL in (\C)$  has its own two adic topologies given by the ideals  $\cI:=\cL in(\C) \cap \mathfrak{a}$, where $\mathfrak{a}$ is the augmentation ideal of  $\hH\C^{\N}$, and  $\cJ:= \cL in(\C) \cap \mathfrak{b}$, where $\mathfrak{b}$ is the augmentation ideal of  $\C^{\N}$.

It follows from the definitions that the $\cI$-adic topology on $\cL in(\C)$ is finer than $\tT_{\Sscript{\hH}}$ and the $\cJ$-adic one is finer than $\tT$. On the one hand, in view of the previous sections, the $\cI$-adic topology is in fact strictly finer than $\tT_{\Sscript{\hH}}$. On the other hand, however, one can show that the $\cJ$-adic topology turns out to be equivalent to $\tT$, since it is known that $\what{\C[Z]_{\langle Z \rangle}}$ is homeomorphic to $\what{\C[Z]}\cong\C[[Z]]$, and this may be the reason why topologies on $\cL in(\C)$ weren't analysed before.

Finally, comparing the topologies $\tT$ and  $\tT_{\Sscript{\hH}}$ on $\cL in (\C)$ seems to be more involved. Apparently it is possible that these are different. However, it is not clear to us how to show, for instance, that any open neighbourhood of the form $\cL in (\C) \cap \fk{a}^{n}$ (the product is that of $\C^{\N}$) is not contained in some open $\cL in (\C) \cap \fk{b}^{m}$ (the product now is in $\hH\C^{\N}$). What is clear instead is that the isomorphism $\zeta$ does not map linearly recursive sequences in $\hH\C^{\N}$ to linearly recursive sequences in $\C^{\N}$.

\bigskip

\noindent\textbf{Aknowledgements:} The authors thank A. Ardizzoni for the fruitful (coffees and) discussions. The first author would like to thank all the members of the dipartimento di Matematica ``Giuseppe Peano'' for providing a very fruitful working ambience.

\end{document}